\setlist[enumerate]{nosep}
\definecolor{labelkey}{rgb}{0,0.08,0.45}
\definecolor{refkey}{rgb}{0,0.6,0.0}
\definecolor{Brown}{rgb}{0.45,0.0,0.05}
\definecolor{lime}{rgb}{0.00,0.8,0.0}
\definecolor{lblue}{rgb}{0.5,0.5,0.99}
\colorlet{hlcyan}{cyan!30}
\def\namedlabel#1#2{\begingroup
   \def\@currentlabel{#2}%
   \label{#1}\endgroup
}
\newcommand{\sepp}{\setlength{\itemsep}{-2pt}}
\newcommand{\seppthree}{\setlength{\itemsep}{-3pt}}
\newcommand{\R}[1]{\ensuremath{{\operatorname{R}}_%
{#1}}}
\newcommand{\Pj}[1]{\ensuremath{{\operatorname{P}}_%
{#1}}}
\newcommand{\Rt}[1]{\ensuremath{{\operatorname{Rot}}%
({#1})}}
\newcommand{\Rf}[1]{\ensuremath{{\operatorname{Refl}}%
({#1})}}
\newcommand{\Rfop}{\ensuremath{{\operatorname{Refl}}}}
\newcommand{\Rtop}{\ensuremath{{\operatorname{Rot}}}}
\newcommand{\menge}[2]{\big\{{#1}~\big |~{#2}\big\}}
\newcommand{\fenv}[1]%
{\ensuremath{\,\overrightarrow{\operatorname{env}}_{#1}}}
\newcommand{\benv}[1]%
{\ensuremath{\,\overleftarrow{\operatorname{env}}_{#1}}}
\newcommand{\scal}[2]{\left\langle{#1},{#2}  \right\rangle}
\newcommand{\RR}{\ensuremath{\mathbb R}}
\newcommand{\ZZ}{\ensuremath{\mathbf Z}}
\newcommand{\ran}{\ensuremath{{\operatorname{ran}}\,}}
\newcommand{\Fix}{\ensuremath{\operatorname{Fix}}}
\newcommand{\Id}{\ensuremath{\operatorname{Id}}}
\crefname{equation}{}{equations}
\crefname{chapter}{Appendix}{chapters}
\crefname{item}{}{items}
\crefname{enumi}{}{}
\newtheorem{theorem}{Theorem}[section]
\newtheorem{lemma}[theorem]{Lemma}
\newtheorem{proposition}[theorem]{Proposition}
\newtheorem{example}[theorem]{Example}
\newtheorem{fact}[theorem]{Fact}
\newtheorem{remark}[theorem]{Remark}
\providecommand{\RR}{\mathbb{R}}
\providecommand{\ran}{\operatorname{ran}}
\providecommand{\Id}{\operatorname{{ Id}}}
\providecommand{\ZZ}{{\bf Z}}
\providecommand{\ran}{\operatorname{ran}}
\providecommand{\Id}{\operatorname{Id}}
\providecommand{\R}{{ R}}
\providecommand{\RR}{\mathbb{R}}
\definecolor{myblue}{rgb}{.8, .8, 1}
\begin{document}

%

\author{
Salihah Alwadani\thanks{
Mathematics, University
of British Columbia,
Kelowna, B.C.\ V1V~1V7, Canada. E-mail:
\texttt{saliha01@mail.ubc.ca}.},~ 
Heinz H.\ Bauschke\thanks{
Mathematics, University
of British Columbia,
Kelowna, B.C.\ V1V~1V7, Canada. E-mail:
\texttt{heinz.bauschke@ubc.ca}.}
~and~
Xianfu Wang\thanks{
Mathematics, University
of British Columbia,
Kelowna, B.C.\ V1V~1V7, Canada. E-mail:
\texttt{shawn.wang@ubc.ca}.}
}

\title{\textsf{
Fixed points of compositions 
of nonexpansive mappings: \\ 
finitely many linear reflectors\footnote{
Dedicated to Terry Rockafellar on the occasion of his 85th birthday
}
}
}

\date{April 26, 2020}

\maketitle

\begin{abstract}
Nonexpansive mappings play a central role in modern optimization and
monotone operator theory because their fixed points can describe 
solutions to optimization or critical point problems. 
It is known that when the mappings are sufficiently ``nice'',
then 
the fixed point set of the composition coincides with the intersection 
of the individual fixed point sets. 

In this paper, we explore the situation for compositions 
of linear reflectors. We provide positive results, upper bounds, and limiting 
examples. We also discuss classical reflectors in the Euclidean plane. 
\end{abstract}
{ 
\noindent
{\bfseries 2020 Mathematics Subject Classification:}
{Primary 
47H09, 
Secondary 
47H10, 90C25
}

\noindent {\bfseries Keywords:}
composition, 
fixed point set, 
isometry, 
linear subspace, 
projector, 
reflector,
rotation
}

\section{Introduction}

Throughout, we assume that 
\begin{equation}
\text{$X$ is
a real Hilbert space with inner product
$\scal{\cdot}{\cdot}\colon X\times X\to\RR$, }
\end{equation}
and induced norm $\|\cdot\|\colon X\to\RR\colon x\mapsto \sqrt{\scal{x}{x}}$.
A mapping $R\colon X\to X$ is \emph{nonexpansive} if 
$(\forall x\in X)(\forall y\in X)$
$\|Rx-Ry\|\leq\|x-y\|$. 
Nonexpansive operators play a central role in modern optimization 
because the set of fixed points $\Fix R := \menge{x\in X}{x=Rx}$
often represents solutions to optimization or inclusion problems 
(see, e.g., \cite{BC2017}). 
A central question is the following 
\begin{quote}
{\em 
Given nonexpansive maps $R_1,\ldots,R_m$ on $X$ with 
$\bigcap_{i=1}^m\Fix R_i\neq\varnothing$, 
what can we say about $\Fix (R_mR_{m-1}\cdots R_1)$? 
}
\end{quote}
Clearly, 
\begin{equation}
  \label{e:inclusion}
  \textstyle \bigcap_{i=1}^{m}\Fix R_i \subseteq \Fix (R_mR_{m-1}\cdots R_1).
\end{equation}
Note that one cannot expect equality to hold in \cref{e:inclusion}:
\begin{example}
\label{ex:-Id}
Suppose that $X\neq \{0\}$. Then 
$\Fix(-\Id)\cap\Fix(-\Id)=\{0\}$ while 
$\Fix(-\Id)(-\Id)=\Fix(\Id)= X$.
\end{example}
However, equality in \cref{e:inclusion} does hold 
for ``nice'' nonexpansive maps 
such as averaged mappings (see, e.g., \cite[Corollary~4.51]{BC2017})
or even strongly nonexpansive mappings (see \cite[Lemma~2.1]{BruckReich}). 

In this note, we aim to study $\Fix (R_mR_{m-1}\cdots R_1)$ for certain mappings 
that are not nice but that do have some additional structure. To describe this,
let us denote the \emph{projector} (or nearest point mapping) associated 
with a nonempty closed convex subset $C$ of $X$ by $\Pj{C}$.
The corresponding \emph{reflector} 
\begin{equation}
\R{C} := 2\Pj{C}-\Id
\end{equation}
is known to be nonexpansive (see, e.g., \cite[Corollary~4.18]{BC2017}). 
Note that 
\begin{equation}
\label{e:FixR}
\Fix\R{C} = C
\end{equation}
and that $\R{\{0\}} = -\Id$, so the class of 
reflectors is ``bad'' (see \cref{ex:-Id}). 
We also have 
\begin{equation}
  \label{e:Uperp}
  \R{C} = \Pj{C} - \Pj{C^\perp} \quad 
\text{provided that $C$ is a closed linear subspace of $X$.}
\end{equation}
When $C$ is a hyperplane containing the origin, then 
we shall refer to $\R{C}$ as a \emph{classical} reflector.
Classical reflectors are basic building blocks: 
indeed,
the \emph{Cartan-Dieudonn\'e Theorem}
(see, e.g., \cite[Theorem~8.1]{Gallier} or 
\cite[Section~2.4]{Stillwell}) states that 
every linear isometry on $\RR^n$ is the composition of 
at most $n$ classical reflectors.

A very satisfying result is available for two general linear reflectors:

\begin{fact}
  (See \cite[Proposition~3.6]{postdocs}.) 
\label{f:postdocs}
Let $U_1$ and $U_2$ be closed linear subspaces of $X$. 
Then 
\begin{equation}
  \Fix (\R{U_2}\R{U_1}) = (U_1\cap U_2)\oplus (U_1^\perp\cap U_2^\perp).
\end{equation}
and 
\begin{equation}
  \Pj{U_1}\Fix (\R{U_2}\R{U_1})= U_1\cap U_2. 
\end{equation}
\end{fact}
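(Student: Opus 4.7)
The plan is to decompose every $x\in X$ via the orthogonal direct sum $X=U_1\oplus U_1^\perp$, writing $x=a+b$ with $a=\Pj{U_1}x\in U_1$ and $b=\Pj{U_1^\perp}x\in U_1^\perp$. By \cref{e:Uperp}, $\R{U_1}x = a-b$, so the condition $x\in\Fix(\R{U_2}\R{U_1})$ becomes
\begin{equation*}
\R{U_2}(a-b) \;=\; a+b.
\end{equation*}
Using $\R{U_2} = \Pj{U_2}-\Pj{U_2^\perp}$ together with $\Pj{U_2}+\Pj{U_2^\perp}=\Id$, I would add and subtract the two scalar identities to conclude that this reflection equation is equivalent to
\begin{equation*}
\Pj{U_2}(a-b)=a \quad\text{and}\quad \Pj{U_2^\perp}(a-b)=-b.
\end{equation*}

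The first of these forces $a\in U_2$ (since the projector's range lies in $U_2$) and $-b\in U_2^\perp$ (since $(a-b)-a=-b$ must lie in the kernel of $\Pj{U_2}$, i.e., in $U_2^\perp$); the second yields the same conclusions. Hence $a\in U_1\cap U_2$ and $b\in U_1^\perp\cap U_2^\perp$. For the reverse inclusion, given any such $a$ and $b$, a direct computation of $\R{U_2}\R{U_1}(a+b)=\R{U_2}(a-b)=a+b$ (using $a\in U_2$ and $-b\in U_2^\perp$) verifies that $a+b$ is indeed a fixed point. Since the two summands lie in orthogonal subspaces, the sum is literally an orthogonal direct sum.

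The second identity then follows almost for free: applying $\Pj{U_1}$ to a typical element $a+b$ of the direct sum gives $a\in U_1\cap U_2$ because $a\in U_1$ and $b\in U_1^\perp$; conversely, every $a\in U_1\cap U_2$ is the image of itself under $\Pj{U_1}$ and lies in the fixed point set (take $b=0$). The only place demanding a bit of care is the algebra connecting the reflection identity with the two projection identities; there is no substantive obstacle beyond tracking which component sits in which subspace.
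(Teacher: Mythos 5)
Your proof is correct. Note that the paper does not prove this statement itself --- it is quoted as a Fact from Proposition~3.6 of the cited reference \cite{postdocs} --- so there is no internal proof to compare against; your decomposition $x=a+b$ along $X=U_1\oplus U_1^\perp$, together with the add/subtract step reducing $\R{U_2}(a-b)=a+b$ to $\Pj{U_2}(a-b)=a$ and $\Pj{U_2^\perp}(a-b)=-b$ (whence $a\in U_1\cap U_2$ and $b\in U_1^\perp\cap U_2^\perp$), is a clean, self-contained elementary argument that establishes both claims.
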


\cref{f:postdocs} was used in 
\cite{postdocs} to analyze the 
Douglas--Rachford operator $T := \tfrac{1}{2}(\Id+\R{U_2}\R{U_1})$.
Note that $\Fix T = \Fix(\R{U_2}\R{U_1})$! 
It was shown that $\Pj{U_1}T^n \to \Pj{U_1\cap U_2}$ pointwise.
Iterating $T$ is actually a special case of employing 
Rockafellar's proximal point algorithm \cite{Rockppa}. 

We also note that \cref{f:postdocs} provides an alternative explanation of 
\cref{ex:-Id}: indeed, set $U_1=U_2=\{0\}$ in \cref{f:postdocs}.
Then $\R{U_1}=\R{U_2}=-\Id$, $U_1^\perp = U_2^\perp = X$, and 
$\Fix(\R{U_2}\R{U_2}) = (U_1\cap U_2) \oplus (U_1^\perp \cap U_2^\perp) = X$. 

\cref{f:postdocs} 
nurtures the hope that 
there might exist a nice formula
for $\Fix (\R{U_3}\R{U_2}\R{U_1})$ and that
there might be a way to recover $U_1\cap U_2\cap U_3$
by projecting $\Fix (\R{U_3}\R{U_2}\R{U_1})$ suitably.
Unfortunately, this hope was crushed with the following example:

\begin{example}
\label{ex:fran}
(See \cite[Example~2.1]{Fran2014}.)
Suppose that $X=\RR^2$, 
$U_1 = \RR(0,1)$, 
$U_2 = \RR(\sqrt{3},1)$, 
and $U_3 = \RR(-\sqrt{3},1)$.
Then 
$U_1\cap U_2\cap U_3 = \{0\}$,
$x := (-\sqrt{3},-1)\in\Fix(\R{U_3}\R{U_2}\R{U_1})$
yet 
$\{\Pj{U_1}x,\Pj{U_2}x,\Pj{U_3}x\} \cap U_1 \cap U_2 \cap U_3 = \varnothing$. 
The fixed point sets for all six permutations of the reflectors 
are depicted in \cref{fig:fran}. 
\end{example}

\begin{figure}
  \centering
   \includegraphics[width=0.95\linewidth]{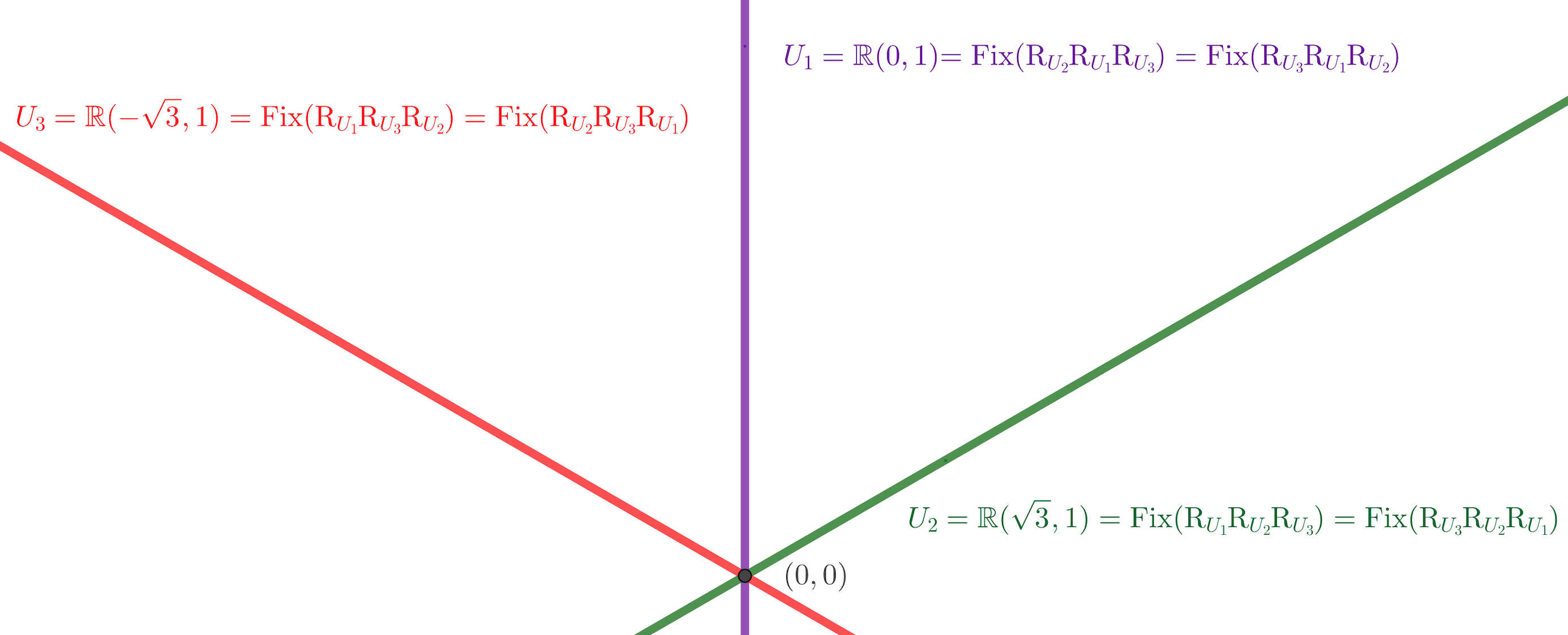}
   \caption{The fixed point sets for \cref{ex:fran}}
   \label{fig:fran}
\end{figure}

We are now in a position to describe precisely our aim. 

{\em The goal of this note is to study the fixed point set of the composition of
finitely many reflectors associated with closed linear subspaces.}

In \cref{sec:positive}, we obtain several positive results 
(see \cref{l:shift} and \cref{t:reversal}), an upper bound 
(see \cref{p:upperbound}) as well as limiting examples.
\cref{sec:plane} focusses mainly on classical reflectors in the Euclidean plane 
for which precise information is available. 
The notation employed is standard and follows largely \cite{BC2017}.

\section{General results}

\label{sec:positive}


We start with a simple observation. 

\begin{lemma}
  \label{l:easy}
Let $U$ be a closed linear subspace of $X$. Then 
\begin{enumerate}
  \item 
  \label{l:easy1}
  $\R{U^\perp}\R{U}=\R{U}\R{U^\perp}=-\Id$
  \item 
  \label{l:easy2}
  $-\R{U}=\R{U}\circ(-\Id)=\R{U^\perp}$
  \item 
  \label{l:easy3}
  $\Fix(-\R{U})=\Fix\R{U^\perp}=U^\perp$.
\end{enumerate}
\end{lemma}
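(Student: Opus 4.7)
The plan is to deduce everything from the decomposition in \cref{e:Uperp}, namely
$\R{U}=\Pj{U}-\Pj{U^\perp}$ and, by symmetry (since $(U^\perp)^\perp=U$),
$\R{U^\perp}=\Pj{U^\perp}-\Pj{U}$, together with the standard identities
$\Pj{U}+\Pj{U^\perp}=\Id$, $\Pj{U}^2=\Pj{U}$, $\Pj{U^\perp}^2=\Pj{U^\perp}$,
and $\Pj{U}\Pj{U^\perp}=\Pj{U^\perp}\Pj{U}=0$ (all valid because $U$ is a closed
linear subspace). These are the only ingredients needed; no step involves any real obstacle, as the whole statement is a routine consequence of linearity and orthogonality.

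For \cref{l:easy1}, I would expand
$\R{U^\perp}\R{U}=(\Pj{U^\perp}-\Pj{U})(\Pj{U}-\Pj{U^\perp})$
using the four product identities above, which collapses to
$-\Pj{U^\perp}-\Pj{U}=-\Id$. Swapping the roles of $U$ and $U^\perp$ (or computing in the opposite order) gives $\R{U}\R{U^\perp}=-\Id$ by exactly the same calculation.

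For \cref{l:easy2}, the formula $\R{U}=\Pj{U}-\Pj{U^\perp}$ immediately yields
$-\R{U}=\Pj{U^\perp}-\Pj{U}=\R{U^\perp}$. The equality $\R{U}\circ(-\Id)=-\R{U}$
follows from the linearity of $\Pj{U}$ and $\Pj{U^\perp}$ (since $U$ and $U^\perp$ are closed linear subspaces): for each $x\in X$,
\[
\R{U}(-x)=\Pj{U}(-x)-\Pj{U^\perp}(-x)=-\Pj{U}x+\Pj{U^\perp}x=-\R{U}x.
\]
Alternatively, this equality is a direct consequence of \cref{l:easy1}, since composing $\R{U}\R{U^\perp}=-\Id$ on the right with $\R{U^\perp}$ and using $\R{U^\perp}^2=\Id$ gives $\R{U}=-\R{U^\perp}$, and thus $\R{U}(-x)=-\R{U^\perp}(-\,\cdot\,)=-\R{U}x$.

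For \cref{l:easy3}, part \cref{l:easy2} yields $\Fix(-\R{U})=\Fix\R{U^\perp}$, and
\cref{e:FixR} applied to the closed convex set $U^\perp$ gives $\Fix\R{U^\perp}=U^\perp$, completing the proof.
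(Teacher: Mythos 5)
Your proposal is correct and follows essentially the same route as the paper: both expand $\R{U}=\Pj{U}-\Pj{U^\perp}$ via \cref{e:Uperp}, use orthogonality of the projectors to collapse the product in \cref{l:easy1}, read off \cref{l:easy2} from the same decomposition, and obtain \cref{l:easy3} by combining \cref{l:easy2} with \cref{e:FixR}. Your extra remark that $\R{U}\circ(-\Id)=-\R{U}$ by linearity is a detail the paper leaves implicit, but it changes nothing substantive.
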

\begin{proof}
We shall employ \cref{e:Uperp} repeatedly. 
\cref{l:easy1}: 
$\R{U^\perp}\R{U} = (\Pj{U^\perp}-\Pj{U})(\Pj{U}-\Pj{U^\perp})
=0 - \Pj{U}-\Pj{U\perp}+0 = -\Id$.
\cref{l:easy2}: 
$-\R{U} = -(\Pj{U}-\Pj{U^\perp}) 
= (\Pj{U^\perp} - \Pj{U^{\perp\perp}}) = \R{U^\perp}$.
\cref{l:easy3}: 
Combine \cref{l:easy2} with \cref{e:FixR}. 
\end{proof}

The following result, which is a consequence of \cref{l:easy}, 
provides a case when 
we have precise knowledge of the fixed point set of 
the composition of three reflectors:

\begin{proposition}
\label{ex:0421a}
Let $U$ and $V$ be closed linear subspaces of $X$. 
Then 
$\R{V}\R{U^\perp}\R{U}=\R{V}\R{U}\R{U^\perp}
=\R{U^\perp}\R{U}\R{V}=\R{U}\R{U^\perp}\R{V}=-\R{V}=\R{V^\perp}$
and thus 
$\Fix(\R{V}\R{U^\perp}\R{U})=\Fix(\R{V}\R{U}\R{U^\perp})
=\Fix(\R{U^\perp}\R{U}\R{V})=\Fix(\R{U}\R{U^\perp}\R{V})=V^\perp$.
\end{proposition}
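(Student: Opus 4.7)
The plan is to observe that the entire statement reduces to repeated invocation of \cref{l:easy}, and that the main content is merely checking that the four triple compositions can all be massaged into the form $-\R{V}$.

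First, I would note that \cref{l:easy}\cref{l:easy1} gives $\R{U^\perp}\R{U}=\R{U}\R{U^\perp}=-\Id$. Since $-\Id$ is a scalar multiple of the identity, it commutes with every linear map, and in particular with $\R{V}$. This immediately collapses all four triple compositions: for example,
\begin{equation*}
\R{V}\R{U^\perp}\R{U} = \R{V}(-\Id) = -\R{V},
\end{equation*}
and analogously $\R{V}\R{U}\R{U^\perp}=-\R{V}$, while
\begin{equation*}
\R{U^\perp}\R{U}\R{V} = (-\Id)\R{V} = -\R{V}
\end{equation*}
and likewise $\R{U}\R{U^\perp}\R{V}=-\R{V}$. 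So all four expressions equal $-\R{V}$, which by \cref{l:easy}\cref{l:easy2} equals $\R{V^\perp}$. This settles the chain of operator equalities.

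For the fixed point statement, I would simply apply $\Fix$ to each side. All four composed operators coincide with $-\R{V}=\R{V^\perp}$, so their fixed point sets coincide; by \cref{l:easy}\cref{l:easy3} (or equivalently by \cref{e:FixR} applied to $\R{V^\perp}$), this common set is $V^\perp$.

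I do not anticipate any real obstacle: the content of the proposition is that having both $\R{U}$ and $\R{U^\perp}$ appear consecutively in a composition effectively cancels them out to $-\Id$, no matter where they sit relative to $\R{V}$, because $-\Id$ is central in the semigroup of linear operators. The only thing to be careful about is verifying that the two orderings $(\R{U^\perp}\R{U})$ and $(\R{U}\R{U^\perp})$ both yield $-\Id$, which is exactly the first part of \cref{l:easy}, and that sandwiching $\R{V}$ on either side therefore produces the same outcome.
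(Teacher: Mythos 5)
Your proof is correct and follows exactly the route the paper intends: the proposition is stated there as an immediate consequence of \cref{l:easy}, using \cref{l:easy}\cref{l:easy1} to collapse the pair $\R{U^\perp}\R{U}=\R{U}\R{U^\perp}=-\Id$, the centrality of $-\Id$ to handle all four orderings, and \cref{l:easy}\cref{l:easy2}--\cref{l:easy3} to identify the result as $\R{V^\perp}$ with fixed point set $V^\perp$. No gaps.
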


We now turn to $m$ operators and obtain a very general result which 
clearly shows the effect of cyclically shifting a composition:

\begin{lemma}
\label{l:shift}
Let $R_1,\ldots,R_m$ be arbitrary maps from $X$ to $X$.
Then
\begin{subequations}
\label{e:shift}
\begin{align}
  \Fix(R_mR_{m-1}\cdots R_2R_1) 
  &=
  (R_mR_{m-1}\cdots R_3R_2)\big(\Fix(R_1R_mR_{m-1}\cdots R_3R_2)\big) \\
  &=
  (R_mR_{m-1}\cdots R_3)\big(\Fix(R_2R_1R_mR_{m-1}\cdots R_4R_3)\big) \\
  & \;\,\, \vdots \\
  &=
  R_mR_{m-1}\big(\Fix(R_{m-2}R_{m-3}\cdots R_2R_1R_mR_{m-1})\big)\\
  &=
  R_m\big(\Fix(R_{m-1}R_{m-2}\cdots R_2R_1R_m)\big).
\end{align}
\end{subequations}
\end{lemma}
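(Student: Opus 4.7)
The plan is to reduce the entire chain \cref{e:shift} to the single cyclic identity
\begin{equation*}
\Fix(AB) = A\bigl(\Fix(BA)\bigr),
\end{equation*}
valid for any pair of self-maps $A,B\colon X\to X$. Once this sublemma is in hand, each line of \cref{e:shift} follows from a single well-chosen application.

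I would establish the sublemma by double inclusion. For ``$\subseteq$'', if $x\in\Fix(AB)$, then setting $y:=Bx$ yields $BAy=B(Ay)=B(ABx)=Bx=y$, so $y\in\Fix(BA)$ and $x=Ay\in A\bigl(\Fix(BA)\bigr)$. For ``$\supseteq$'', if $y\in\Fix(BA)$, then $x:=Ay$ satisfies $ABx=A(BAy)=Ay=x$, so $x\in\Fix(AB)$. The argument uses only associativity of composition; no regularity assumption on $A$ or $B$ is needed, which matches the ``arbitrary maps'' hypothesis in the statement.

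With the sublemma available, I would iterate over $k\in\{1,\ldots,m-1\}$ by choosing $A := R_mR_{m-1}\cdots R_{k+1}$ and $B := R_kR_{k-1}\cdots R_1$. Then $AB = R_mR_{m-1}\cdots R_1$ while $BA = R_kR_{k-1}\cdots R_1R_mR_{m-1}\cdots R_{k+1}$, so the sublemma reads
\begin{equation*}
\Fix(R_m\cdots R_1) = (R_mR_{m-1}\cdots R_{k+1})\bigl(\Fix(R_kR_{k-1}\cdots R_1R_mR_{m-1}\cdots R_{k+1})\bigr).
\end{equation*}
The value $k=1$ produces the first line of \cref{e:shift}, $k=2$ the second, and so on up to $k=m-1$, which yields the last line. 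There is no genuine obstacle here beyond verifying that the cyclic rotation inside $BA$ is indexed correctly; the sublemma itself is purely set-theoretic and the rest is bookkeeping.
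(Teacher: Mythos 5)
Your proof is correct. The sublemma $\Fix(AB)=A\bigl(\Fix(BA)\bigr)$ for arbitrary self-maps is verified correctly by double inclusion, and the instantiation $A:=R_m\cdots R_{k+1}$, $B:=R_k\cdots R_1$ for $k=1,\ldots,m-1$ does produce exactly the lines of \cref{e:shift}. The route differs from the paper's in how equality is obtained. The paper only ever proves the one-directional inclusion $B\bigl(\Fix(AB)\bigr)\subseteq\Fix(BA)$ (in the form $R_1\bigl(\Fix(R_m\cdots R_1)\bigr)\subseteq\Fix(R_1R_m\cdots R_2)$), chains these inclusions through all $m$ cyclic shifts, and observes that the composite chain returns to $\Fix(R_m\cdots R_1)$ itself, forcing equality everywhere at once; no reverse inclusion is ever argued directly. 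You instead prove the two-sided identity for a single split and apply it $m-1$ times with different groupings. Your version is more modular --- each line of \cref{e:shift} is established independently and the sublemma is reusable on its own --- whereas the paper's sandwich argument gets all the equalities simultaneously but only after the full cycle is closed. Both arguments use nothing beyond associativity and are valid for arbitrary maps, as required.
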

\begin{proof}
Let $x \in \Fix(R_mR_{m-1}\cdots R_2R_1)$.
Then
$R_1x = R_1(R_mR_{m-1}\cdots R_2R_1)x
= (R_1R_mR_{m-1}\cdots R_3R_2)(R_1x)$
and so 
$R_1x\in \Fix(R_1R_mR_{m-1}\cdots R_3R_2)$. 
It follows that 
\begin{equation}
  R_1\big(\Fix(R_mR_{m-1}\cdots R_2R_1)\big) 
  \subseteq 
  \Fix(R_1R_mR_{m-1}\cdots R_3R_2).
\end{equation}
The same reasoning gives
\begin{subequations}
\begin{align}
  (R_2R_1)\big(\Fix(R_mR_{m-1}\cdots R_2R_1)\big) 
  &\subseteq 
  R_2\big(\Fix(R_1R_mR_{m-1}\cdots R_3R_2)\big)\\
  &\subseteq 
  \Fix(R_2R_1R_mR_{m-1}\cdots R_4R_3),
\end{align}
\end{subequations}
hence 
\begin{subequations}
\begin{align}
  (R_3R_2R_1)\big(\Fix(R_mR_{m-1}\cdots R_2R_1)\big) 
  &\subseteq 
  (R_3R_2)\big(\Fix(R_1R_mR_{m-1}\cdots R_3R_2)\big)\\
  &\subseteq 
  R_3\big(\Fix(R_2R_1R_mR_{m-1}\cdots R_4R_3)\big)\\
  &\subseteq 
  \Fix(R_3R_2R_1R_mR_{m-1}\cdots R_5R_4)
\end{align}
\end{subequations}
until finally 
\begin{subequations}
\label{e:shiftproof}
\begin{align}
  \Fix(R_mR_{m-1}\cdots R_2R_1) 
  &= (R_mR_{m-1}\cdots R_2R_1)\big(\Fix(R_mR_{m-1}\cdots R_2R_1)\big) \\
  &\subseteq 
  (R_mR_{m-1}\cdots R_2)\big(\Fix(R_1R_mR_{m-1}\cdots R_3R_2)\big) \\
  & \;\,\, \vdots \\
  &\subseteq 
  R_m\big(\Fix(R_{m-1}R_{m-2}\cdots R_2R_1R_m)\big)\\
  &\subseteq \Fix(R_mR_{m-1}\cdots R_2R_1).
\end{align}
\end{subequations}
Hence equality holds throughout \eqref{e:shiftproof} 
and we are done.
\end{proof}

\cref{l:shift} allows us to derive a result complementary to \cref{ex:0421a}:

\begin{proposition}
\label{ex:0421b}
Let $U$ and $V$ be closed linear subspaces of $X$. 
Then 
\begin{equation}
\Fix(\R{U^\perp}\R{V}\R{U})=\Fix(\R{U}\R{V}\R{U^\perp})
= \R{U}(V^\perp).
\end{equation}
\end{proposition}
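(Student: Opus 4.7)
The plan is to reduce each of the two fixed point sets via the cyclic shift \cref{l:shift}, apply \cref{ex:0421a}, and then unify the two resulting descriptions using \cref{l:easy}.

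First, I would handle $\Fix(\R{U}\R{V}\R{U^\perp})$. Applying \cref{l:shift} with $m=3$, $R_1=\R{U^\perp}$, $R_2=\R{V}$, $R_3=\R{U}$ yields
$\Fix(\R{U}\R{V}\R{U^\perp}) = \R{U}\bigl(\Fix(\R{V}\R{U^\perp}\R{U})\bigr).$
By \cref{ex:0421a}, the inner fixed point set equals $V^\perp$, so $\Fix(\R{U}\R{V}\R{U^\perp}) = \R{U}(V^\perp)$, which is exactly half of the desired conclusion.

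Next, I would treat $\Fix(\R{U^\perp}\R{V}\R{U})$ in the same manner, now with $R_1=\R{U}$, $R_2=\R{V}$, $R_3=\R{U^\perp}$, to obtain
$\Fix(\R{U^\perp}\R{V}\R{U}) = \R{U^\perp}\bigl(\Fix(\R{V}\R{U}\R{U^\perp})\bigr) = \R{U^\perp}(V^\perp),$
again invoking \cref{ex:0421a} for the inner fixed point set.

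Finally, to merge the two expressions, I would use \cref{l:easy}\cref{l:easy2}, namely $\R{U^\perp} = -\R{U}$, together with the fact that $V^\perp$ is a linear subspace (so $-V^\perp = V^\perp$) and $\R{U}$ is linear, to conclude
$\R{U^\perp}(V^\perp) = -\R{U}(V^\perp) = \R{U}(-V^\perp) = \R{U}(V^\perp).$
There is no real obstacle here; the only step requiring a moment's thought is this last identification of the two image sets, but it is immediate once the linearity of $V^\perp$ and the sign relation between the reflectors are combined. All three pieces of the stated equality then line up.
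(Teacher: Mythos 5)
Your proposal is correct and follows essentially the same route as the paper: cyclic shift via \cref{l:shift}, evaluation of the inner fixed point set via \cref{ex:0421a}, and identification of $\R{U^\perp}(V^\perp)$ with $\R{U}(V^\perp)$ using $\R{U^\perp}=-\R{U}$ and the subspace structure. The only cosmetic difference is that the paper obtains the second fixed point set by citing the first half of its argument with $U$ replaced by $U^\perp$, whereas you apply the shift lemma a second time explicitly; the content is identical.
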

\begin{proof}
Using \cref{l:shift} and \cref{ex:0421a}, we obtain 
\begin{equation}
\Fix(\R{U}\R{V}\R{U^\perp})
=\R{U}(\Fix(\R{V}\R{U}\R{U^\perp}))
=\R{U}(V^\perp).
\end{equation}
Now $\R{U}(V^\perp)$ is a subspace and thus
$\R{U}(V^\perp)
= (-\R{U})(V^\perp)
= \R{U^\perp}(V^\perp)
=\Fix(\R{U^\perp}\R{V}\R{U})$
by the first part of the proof. 
\end{proof}

\begin{remark}
\label{r:ordermatters}
Comparing \cref{ex:0421a} and \cref{ex:0421b},
we note that 
it is
\emph{not necessarily true} 
that 
$\R{U}(V^\perp) = V^\perp$; 
indeed, see 
\cref{ex:hui} below 
for a concrete instance. 
Hence, unlike the case of just two linear reflectors 
(see \cref{f:postdocs}), 
\emph{the order of the operators does influence the fixed point set!}
\end{remark}

While \cref{r:ordermatters} points out the importance of the order 
of the operators, there does exist a nice permutation of the reflectors yielding 
the same fixed point set. 
To describe this result, observe first
$(\R{U_m}\cdots \R{U_1})^* = 
\R{U_1}^*\cdots \R{U_m}^*
=\R{U_1}\cdots \R{U_m}$ 
because linear projectors and (hence) reflectors are self-adjoint. 
Combining this with an old result by Riesz and Sz.-Nagy which states that 
$\Fix T = \Fix T^*$ for any nonexpansive linear operator $T\colon X\to X$
(see \cite[page~408 in Section~X.144]{RSNbook}
or \cite{RSNpaper}),
 we obtain the following positive result:

\begin{theorem}
\label{t:reversal}
Let $U_1,\ldots,U_m$ be closed linear subspaces of $X$. 
Then 
\begin{equation}
\Fix(\R{U_m}\R{U_{m-1}}\cdots \R{U_2}\R{U_1})
= \Fix(\R{U_1}\R{U_{2}}\cdots \R{U_{m-1}}\R{U_m}).
\end{equation}
\end{theorem}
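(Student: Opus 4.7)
The plan is to read the proof directly off the two observations highlighted in the paragraph immediately preceding the statement: self-adjointness of linear reflectors, and the Riesz--Sz.-Nagy fact that fixed points are preserved under taking adjoints of nonexpansive linear operators.

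First I would set $T := \R{U_m}\R{U_{m-1}}\cdots \R{U_2}\R{U_1}$. Since each $U_i$ is a closed linear subspace, the projector $\Pj{U_i}$ is linear and self-adjoint, so $\R{U_i} = 2\Pj{U_i} - \Id$ is also linear and self-adjoint. Using $(AB)^* = B^*A^*$ iteratively, I would then compute
\begin{equation}
T^* = (\R{U_m}\R{U_{m-1}}\cdots \R{U_2}\R{U_1})^*
= \R{U_1}^* \R{U_2}^* \cdots \R{U_m}^*
= \R{U_1}\R{U_{2}}\cdots \R{U_{m-1}}\R{U_m}.
\end{equation}

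Next, since each $\R{U_i}$ is nonexpansive (as noted in the introduction) and the composition of nonexpansive maps is nonexpansive, $T$ is a nonexpansive linear operator on $X$. By the Riesz--Sz.-Nagy result cited in the paragraph before the theorem, namely $\Fix T = \Fix T^*$ for any nonexpansive linear $T\colon X \to X$, combining with the displayed identity above yields
\begin{equation}
\Fix(\R{U_m}\R{U_{m-1}}\cdots \R{U_2}\R{U_1})
= \Fix T = \Fix T^*
= \Fix(\R{U_1}\R{U_{2}}\cdots \R{U_{m-1}}\R{U_m}),
\end{equation}
which is the desired conclusion.

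There is no real obstacle here; the proof is a two-line assembly of (i) self-adjointness of each $\R{U_i}$ (which relies on the linearity of the subspaces, hence the self-adjointness of the projectors $\Pj{U_i}$), and (ii) the classical $\Fix T = \Fix T^*$ identity for nonexpansive linear maps on Hilbert space. The only point that deserves a brief comment in the write-up is why self-adjointness of $\R{U_i}$ holds, since it is the sole place where the linear-subspace hypothesis is used in an essential way.
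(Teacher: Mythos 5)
Your proof is correct and is exactly the argument the paper gives (the self-adjointness of the linear reflectors combined with the Riesz--Sz.-Nagy identity $\Fix T=\Fix T^*$ for nonexpansive linear $T$ is spelled out in the paragraph immediately preceding the theorem). No issues.
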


We now turn to three linear subspaces. 
The next result narrows down the location of fixed points.

\begin{theorem}
\label{p:upperbound}
Let $U,V,W$ be closed linear subspaces of $X$. 
Then 
\begin{equation}
  \label{e:0422a}
\Fix(\R{W}\R{V}\R{U})
= \Fix\big(4\Pj{W}\Pj{V}\Pj{U}-2(\Pj{W}\Pj{V}+\Pj{W}\Pj{U}+\Pj{V}\Pj{U})
+\Pj{W}+\Pj{V}+\Pj{U}\big)
\end{equation}
and 
\begin{equation}
\Fix(\R{W}\R{V}\R{U})\subseteq U+V+W. 
\end{equation}
\end{theorem}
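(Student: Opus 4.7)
The plan is to prove both assertions at once by directly expanding the composition $\R{W}\R{V}\R{U}$ in terms of the projectors via $\R{C}=2\Pj{C}-\Id$, and then recognizing the operator $S$ appearing on the right-hand side of \eqref{e:0422a} as exactly half of $\R{W}\R{V}\R{U}+\Id$.

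First I would substitute $\R{U}=2\Pj{U}-\Id$, $\R{V}=2\Pj{V}-\Id$, and $\R{W}=2\Pj{W}-\Id$ and multiply out. A two-step expansion starting with
\[
\R{V}\R{U}=4\Pj{V}\Pj{U}-2\Pj{V}-2\Pj{U}+\Id,
\]
followed by left multiplication by $\R{W}$, yields
\[
\R{W}\R{V}\R{U}=8\Pj{W}\Pj{V}\Pj{U}-4\bk{\Pj{W}\Pj{V}+\Pj{W}\Pj{U}+\Pj{V}\Pj{U}}+2\bk{\Pj{W}+\Pj{V}+\Pj{U}}-\Id.
\]
Denoting by $S$ the operator on the right-hand side of \eqref{e:0422a}, this identity rewrites as $\R{W}\R{V}\R{U}+\Id=2S$. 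Consequently, the fixed-point equation $\R{W}\R{V}\R{U}x=x$ is equivalent to $2Sx=2x$, i.e., $Sx=x$, which yields \eqref{e:0422a}.

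For the inclusion, take $x\in\Fix(\R{W}\R{V}\R{U})=\Fix S$. Then $x=Sx$ expresses $x$ as the sum
\[
x = 4\Pj{W}\Pj{V}\Pj{U}x - 2\Pj{W}\Pj{V}x - 2\Pj{W}\Pj{U}x - 2\Pj{V}\Pj{U}x + \Pj{W}x + \Pj{V}x + \Pj{U}x,
\]
in which each summand lies in whichever of $U$, $V$, or $W$ corresponds to the outermost projector. Hence $x\in U+V+W$.

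There is no real conceptual obstacle; the only risk is arithmetic slippage during the expansion. The key observation that makes both assertions fall out simultaneously is the scaling identity $2S=\R{W}\R{V}\R{U}+\Id$, which is the reason for the particular coefficients appearing in the definition of $S$.
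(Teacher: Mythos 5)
Your proof is correct and follows essentially the same route as the paper: expand $\R{W}\R{V}\R{U}$ in terms of projectors to get the identity $\R{W}\R{V}\R{U}+\Id=2S$ (the paper phrases this as ``expanding and simplifying''), and then deduce the inclusion from $\Fix S\subseteq\ran S\subseteq U+V+W$. The arithmetic in your expansion checks out, and making the scaling identity $2S=\R{W}\R{V}\R{U}+\Id$ explicit is a nice touch of clarity over the paper's terser wording.
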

\begin{proof}
Let $x\in X$.
Then $x\in \Fix(\R{W}\R{V}\R{U})$
$\Leftrightarrow$
$x=(2\Pj{W}-\Id)(2\Pj{V}-\Id)(2\Pj{U}-\Id)x$
and \cref{e:0422a} follows by expanding and simplifying.
In turn, 
$\Fix(\R{W}\R{V}\R{U})\subseteq U+V+W$ because 
$\Fix(4\Pj{W}\Pj{V}\Pj{U}-2(\Pj{W}\Pj{V}+\Pj{W}\Pj{U}+\Pj{V}\Pj{U})
+\Pj{W}+\Pj{V}+\Pj{U}) 
\subseteq 
\ran(4\Pj{W}\Pj{V}\Pj{U}-2(\Pj{W}\Pj{V}+\Pj{W}\Pj{U}+\Pj{V}\Pj{U})
+\Pj{W}+\Pj{V}+\Pj{U})
\subseteq W+V+U$. 
\end{proof}

The approach utilized in the proof of 
\cref{p:upperbound} to derive the description of the fixed point set 
also works for any \emph{odd} number of reflectors; however, the resulting 
algebraic expressions don't seem to provide further insights.
The superset obtained; however, will easily generalize to 
an \emph{odd} number of reflectors:

\begin{theorem}
\label{p:odd}
Let 
$U_1,\ldots,U_m$ be an \emph{odd} number of closed linear subspaces of $X$. 
Then 
\begin{equation}
\Fix(\R{U_m}\R{U_{m-1}}\cdots \R{U_1})\subseteq U_1+U_2+\cdots + U_m.
\end{equation}
\end{theorem}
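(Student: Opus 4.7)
The plan is to imitate the expansion strategy used for $m=3$ in \cref{p:upperbound} and to exploit the parity of $m$ at the decisive step.

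I would begin by expanding $\R{U_m}\R{U_{m-1}}\cdots\R{U_1}$: substitute $\R{U_i} = 2\Pj{U_i} - \Id$ in each factor and distribute. Because $\Id$ commutes with every operator, the expansion produces a signed sum indexed by subsets $S \subseteq \{1,\ldots,m\}$, where each nonempty $S = \{i_1 < i_2 < \cdots < i_k\}$ contributes a scalar multiple (namely $(-1)^{m-|S|}2^{|S|}$) of
\[
\Pi_S := \Pj{U_{i_k}}\Pj{U_{i_{k-1}}}\cdots\Pj{U_{i_1}},
\]
while the empty subset contributes $(-1)^m \Id$.

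The hypothesis that $m$ is odd enters here: the empty-subset term equals $-\Id$, so I can group the expansion as
\[
\R{U_m}\R{U_{m-1}}\cdots\R{U_1} = -\Id + L,
\]
where $L$ is a finite linear combination of operators $\Pi_S$ with $S \neq \varnothing$. For each such $\Pi_S$, the leftmost factor is $\Pj{U_{i_k}}$, so $\ran \Pi_S \subseteq U_{i_k} \subseteq U_1 + U_2 + \cdots + U_m$, and since $U_1 + U_2 + \cdots + U_m$ is a linear subspace, $\ran L \subseteq U_1 + U_2 + \cdots + U_m$.

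To conclude, given $x \in \Fix(\R{U_m}\R{U_{m-1}}\cdots\R{U_1})$, the fixed-point equation reads $x = -x + Lx$, hence $2x = Lx \in U_1 + U_2 + \cdots + U_m$, which yields the desired inclusion. The only real obstacle is bookkeeping --- verifying the sign $(-1)^{m-|S|}$ and coefficient $2^{|S|}$ arising from the distribution, and in particular confirming that the $\varnothing$-term is exactly $(-1)^m\Id$. Parity is essential here: had $m$ been even, the empty-subset contribution would be $+\Id$, which would cancel the $x$ on the left-hand side of the fixed-point equation and leave only the much weaker condition $Lx = 0$, insufficient to constrain $x$ to $U_1 + U_2 + \cdots + U_m$.
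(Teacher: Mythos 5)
Your proposal is correct and is essentially the paper's own argument: expand each $\R{U_i}$ as $2\Pj{U_i}-\Id$, use the oddness of $m$ to isolate the $(-1)^m\Id=-\Id$ term, and observe that every remaining term is a composition whose leftmost factor is some $\Pj{U_{i_k}}$, hence has range in $U_1+\cdots+U_m$, so that $2x=Lx$ lies in that sum. If anything, your bookkeeping of the coefficients $(-1)^{m-|S|}2^{|S|}$ and of the ranges of the ordered products $\Pi_S$ is more explicit than the paper's one-line version.
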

\begin{proof}
Let $x\in X$.
Then 
$x \in \Fix(\R{U_m}\cdots \R{U_1})$
$\Leftrightarrow$
$x=\R{U_m}\cdots \R{U_1}x$
$\Leftrightarrow$
$x=(2\Pj{U_m}-\Id)\cdots(2\Pj{U_1}-\Id)x$
$\Rightarrow$
$x\in (-1)^mx + \ran(\sum_{i}\Pj{U_i})$
$\Rightarrow$
$2x \in \ran(\sum_{i}\Pj{U_i}) \subseteq \sum_{i}U_i$.
\end{proof}

\begin{remark}
\cref{p:odd} is false when $m$ is assumed to be even:
indeed, 
assume that $U$ is a proper closed linear subspace of $X$, 
and set $U_1:=U_2:=U$. 
Then 
$\R{U_2}\R{U_1}=\Id$ and hence 
\begin{equation}
\Fix(\R{U_2}\R{U_1})=X \supsetneqq U=U_1+U_2.
\end{equation}
\end{remark}

The next example shows that the upper bound provided in 
\cref{p:odd} is sometimes sharp:

\begin{example}
\label{ex:sharp}
Let 
$U_1,\ldots,U_m$ be closed linear subspaces of $X$  which 
are assumed to be 
\emph{pairwise orthogonal}:
$U_i\perp U_j$ whenever $i\neq j$. 
Then either 
\begin{equation}
\label{ex:sharpodd}
 \text{$m$ is odd and~} \Fix(\R{U_m}\R{U_{m-1}}\cdots \R{U_1})= U_1+U_2+\cdots + U_m
\end{equation}
or 
\begin{equation}
\label{ex:sharpeven}
 \text{$m$ is even and~} 
 \Fix(-\R{U_m}\R{U_{m-1}}\cdots \R{U_1})= U_1+U_2+\cdots + U_m. 
\end{equation}
\end{example}
\begin{proof}
Assume first that $m$ is odd. 
Let $(x_1,\ldots,x_m)\in U_1\times \cdots\times U_m$ 
and set $x=x_1+\cdots + x_m$.
Write $\R{U_i} = \Pj{U_i}-\Pj{U_j^\perp}$ for each $i$ 
(see \cref{e:Uperp}). 
Then
\begin{equation}
\R{U_1}x = (\Pj{U_1}-\Pj{U_1^\perp})(x_1+\cdots+x_m)
= x_1-x_2-\cdots -x_m.
\end{equation}
Hence 
\begin{equation}
\R{U_2}\R{U_1}x = (\Pj{U_2}-\Pj{U_2^\perp})(x_1-x_2\cdots-x_m)
= -x_1-x_2+x_3\cdots +x_m.
\end{equation}
and further 
\begin{equation}
\R{U_3}\R{U_2}\R{U_1}x = (\Pj{U_3}-\Pj{U_3^\perp})(-x_1-x_2+x_3\cdots+x_m)
= x_1+x_2+x_3-x_4-\cdots -x_m.
\end{equation}
In general, for $1\leq k\leq m$, we have 
\begin{equation}
(-1)^{k-1}\R{U_k}\R{U_{k-1}}\cdots\R{U_1}x = (x_1+\cdots+x_k)-(x_{k+1}+\cdots+x_m).
\end{equation}
In particular, because $m-1$ is even, we obtain 
$\R{U_m}\R{U_{m-1}}\cdots \R{U_1}=x$. 
This completes the proof of \cref{ex:sharpodd}.

Now assume that $m$ is even. 
Set $U_{m+1} := \{0\}$.
Then $\R{U_{m+1}}=-\Id$ and $m+1$ is odd.
Therefore, we obtain \cref{ex:sharpeven} from 
the odd case we just proved. 
\end{proof}

In contrast to \cref{ex:sharp}, 
we conclude this section with another example which will 
illustrate that the upper bound in \cref{p:odd} is not always attained:

\begin{example}
Assume that $U$ a closed linear subspace of $X$ such that 
$\{0\}\subsetneqq U$. Let $m$ be an odd positive integer, and
let $i\in\{1,\ldots,m\}$. Then 
set $U_i:=\{0\}$ and $U_j:=U$ for every $j\neq i$.
Then $m-1$ is even and  $\R{\{0\}}=-\Id$. 
Hence 
$\R{U_m}\R{U_{m-1}}\cdots \R{U_1}= -\R{U}^{m-1}
= -\Id$ 
and therefore
\begin{equation}
\Fix(\R{U_m}\R{U_{m-1}}\cdots \R{U_1})=
\Fix(-\Id) = \{0\}\subsetneqq U = U_1+U_2+\cdots+U_m.
\end{equation}
\end{example}

\section{The Euclidean plane $\RR^2$}

\label{sec:plane}

Let us now specialize the general result of the last section 
to the Euclidean plane and classical reflectors. 
We start with some is well known results whose statements can 
be found, e.g., in \cite{WikiRotRef}. 

Set 
\begin{equation}
\Rfop\colon \RR\to \RR^{2\times 2}\colon 
\alpha \mapsto
\begin{pmatrix}
\cos(2\alpha) & \sin(2\alpha)\\
\sin(2\alpha) & -\cos(2\alpha)
\end{pmatrix}.
\end{equation}
It is clear that $\Rfop$ is periodic, with minimal period $\pi$. 
The importance of $\Rfop$ stems from the fact that it 
describes all classical reflectors on $\RR^2$; indeed, 
\begin{equation}
\label{e:0421a}
\R{\RR\cdot(\cos(\alpha),\sin(\alpha))} = \Rf{\alpha}
\end{equation}
for every $\alpha\in\RR$. 
It is convenient to also define 
\begin{equation}
\Rtop\colon \RR\to \RR^{2\times 2}\colon 
\alpha \mapsto
\begin{pmatrix}
\cos(\alpha) & -\sin(\alpha)\\
\sin(\alpha) & \cos(\alpha)
\end{pmatrix}.
\end{equation}
Note that for every $\alpha\in\RR$, 
$\Rt{\alpha}$ describes the counterclockwise rotation 
by $\alpha$; the operator $\Rtop$ is periodic with minimal period $2\pi$. 

The following result provides ``calculus rules'' 
for the composition of reflectors and rotators. It can be verified using 
matrix multiplication and addition theorems 
for sine and cosine. 

\begin{fact}
\label{f:0420}
Let $\alpha$ and $\beta$ be in $\RR$. 
Then the following hold: 
  \sepp
\begin{enumerate}
\item 
\label{f:0420i}
$\Rt{\beta}\Rt{\alpha} = \Rt{\alpha+\beta}$. 
\item 
\label{f:0420ii}
$\Rf{\beta}\Rf{\alpha} = \Rt{2(\beta-\alpha)}$.
\item 
\label{f:0420iii}
$\Rt{\beta}\Rf{\alpha} = \Rf{\alpha+\tfrac{1}{2}\beta}$.
\item 
\label{f:0420iv}
$\Rf{\beta}\Rt{\alpha} = \Rf{\beta-\tfrac{1}{2}\alpha}$. 
\end{enumerate}
\end{fact}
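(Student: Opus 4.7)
The plan is to verify each of the four identities by direct matrix multiplication, invoking only the standard sum/difference formulas
\[
\cos(\theta\pm\varphi)=\cos\theta\cos\varphi\mp\sin\theta\sin\varphi,
\qquad
\sin(\theta\pm\varphi)=\sin\theta\cos\varphi\pm\cos\theta\sin\varphi.
\]
Since all four assertions are statements about $2\times 2$ matrices, they reduce to checking the four scalar entries, and in each case the entries collapse after a single application of the addition formulas. No conceptual step is involved beyond bookkeeping.

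First, for \cref{f:0420i}, I would multiply
\[
\Rt{\beta}\Rt{\alpha}
=\begin{pmatrix}\cos\beta & -\sin\beta\\ \sin\beta & \cos\beta\end{pmatrix}
\begin{pmatrix}\cos\alpha & -\sin\alpha\\ \sin\alpha & \cos\alpha\end{pmatrix}
\]
and read off the four entries $\cos\beta\cos\alpha-\sin\beta\sin\alpha=\cos(\alpha+\beta)$, etc. Next, for \cref{f:0420ii}, I would compute $\Rf{\beta}\Rf{\alpha}$ in the same way, now using the angles $2\alpha$ and $2\beta$, and simplify using $\cos(2\beta)\cos(2\alpha)+\sin(2\beta)\sin(2\alpha)=\cos(2\beta-2\alpha)$ (and analogously for the sine entry), obtaining $\Rt{2(\beta-\alpha)}$.

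For \cref{f:0420iii} and \cref{f:0420iv}, I would again expand the product and match entries against the target matrix. In \cref{f:0420iii}, multiplying a rotation by angle $\beta$ with a reflection by parameter $\alpha$ produces entries such as $\cos\beta\cos(2\alpha)-\sin\beta\sin(2\alpha)=\cos(2\alpha+\beta)$, which is the $(1,1)$-entry of $\Rf{\alpha+\tfrac12\beta}$ by definition. The other three entries follow identically. The verification of \cref{f:0420iv} is symmetric, now using $\cos(2\beta)\cos\alpha+\sin(2\beta)\sin\alpha=\cos(2\beta-\alpha)$.

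There is no genuine obstacle: the only thing to be careful about is sign bookkeeping in \cref{f:0420ii}, where the $(2,2)$-entry of $\Rf{\alpha}$ carries a minus sign and must be tracked consistently so that the off-diagonal entries of $\Rf{\beta}\Rf{\alpha}$ end up symmetric with opposite signs, as required of a rotation. Once this is done in \cref{f:0420ii}, the remaining cases \cref{f:0420iii} and \cref{f:0420iv} can be deduced either by the same direct calculation or, more efficiently, by combining \cref{f:0420ii} with \cref{f:0420i}: for instance, $\Rt{\beta}\Rf{\alpha}=\Rf{\alpha+\tfrac12\beta}\Rf{\alpha}\Rf{\alpha}=\Rf{\alpha+\tfrac12\beta}$ using \cref{f:0420ii} twice, since $\Rf{\alpha}^2=\Rt{0}=\Id$.
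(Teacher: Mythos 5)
Your proposal is correct and matches the paper's approach exactly: the paper simply notes that the fact "can be verified using matrix multiplication and addition theorems for sine and cosine," which is precisely the entrywise verification you carry out. Your closing observation that \cref{f:0420iii} and \cref{f:0420iv} follow from \cref{f:0420ii} together with $\Rf{\alpha}^2=\Rt{0}=\Id$ is a valid and slightly slicker shortcut, but it does not change the substance of the argument.
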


We are now in a position to classify the fixed point sets of 
compositions of classical reflectors on $\RR^2$: 

\begin{theorem}
\label{t:finrefls}
Let $\alpha_1,\ldots,\alpha_m$ be in $\RR$. 
Consider the composition of $m$ classical reflectors,
\begin{equation}
  S_m := \Rf{\alpha_m}\cdots\Rf{\alpha_1}\Rf{\alpha_1},
\end{equation}
and set 
$\beta_m:=\alpha_m-\alpha_{m-1}\pm \cdots -(-1)^m\alpha_1$. 
Then exactly one of the following holds:
\begin{enumerate}
\setlength{\itemsep}{+5pt}
  \item $m$ is odd, 
  $S_m = \Rf{\beta_m}$, and 
  $\Fix S_m = \RR(\cos(\beta_m),\sin(\beta_m))$.
  \item $m$ is even, 
  $S_m = \Rt{2\beta_m}$,
  and 
  $\displaystyle \Fix S_m = \begin{cases}
    \RR^2, &\text{if $\beta_m\in\ZZ\pi$;}\\
    \{0\}, &\text{otherwise.}
  \end{cases}
  $
\end{enumerate}
\end{theorem}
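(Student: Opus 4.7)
The plan is to argue by induction on $m$, using the calculus rules in \cref{f:0420} to convert the composition $S_m$ into either a single reflector $\Rf{\beta_m}$ (for $m$ odd) or a single rotation $\Rt{2\beta_m}$ (for $m$ even). The base case $m=1$ is immediate from \cref{e:0421a} and \cref{e:FixR}, which give $S_1=\Rf{\alpha_1}$ and $\Fix\Rf{\alpha_1}=\RR(\cos\alpha_1,\sin\alpha_1)$, matching $\beta_1=\alpha_1$.

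For the inductive step, I pass from $m$ to $m+1$ via $S_{m+1}=\Rf{\alpha_{m+1}}S_m$, splitting on the parity of $m$.  If $m$ is even, then by induction $S_m=\Rt{2\beta_m}$, so \cref{f:0420}\cref{f:0420iv} gives
\begin{equation*}
S_{m+1}=\Rf{\alpha_{m+1}}\Rt{2\beta_m}=\Rf{\alpha_{m+1}-\beta_m}.
\end{equation*}
A short calculation, using that $\beta_m=\alpha_m-\alpha_{m-1}+\cdots-\alpha_1$ for even $m$, shows $\alpha_{m+1}-\beta_m=\alpha_{m+1}-\alpha_m+\alpha_{m-1}-\cdots+\alpha_1=\beta_{m+1}$, so $S_{m+1}=\Rf{\beta_{m+1}}$, and the odd-case conclusion follows from \cref{e:0421a} and \cref{e:FixR}. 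If $m$ is odd, then by induction $S_m=\Rf{\beta_m}$, so \cref{f:0420}\cref{f:0420ii} gives
\begin{equation*}
S_{m+1}=\Rf{\alpha_{m+1}}\Rf{\beta_m}=\Rt{2(\alpha_{m+1}-\beta_m)}=\Rt{2\beta_{m+1}},
\end{equation*}
again after a direct sign-bookkeeping check that $\alpha_{m+1}-\beta_m=\beta_{m+1}$.

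It remains to read off the fixed point sets. For the odd case, $\Fix\Rf{\beta_{m+1}}$ is the axis of reflection, which by \cref{e:0421a} and \cref{e:FixR} is $\RR(\cos\beta_{m+1},\sin\beta_{m+1})$. For the even case, $\Rt{2\beta_{m+1}}$ fixes all of $\RR^2$ precisely when $2\beta_{m+1}\in 2\pi\ZZ$, i.e.\ $\beta_{m+1}\in\ZZ\pi$; otherwise $\Rt{2\beta_{m+1}}-\Id$ is invertible (its determinant equals $2-2\cos(2\beta_{m+1})>0$) and so only $0$ is fixed. This can be stated as a small standalone observation about planar rotations and is the only nonroutine ingredient beyond \cref{f:0420}.

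The argument is essentially bookkeeping, and the only real pitfall is keeping the alternating signs in $\beta_m$ correct through the induction; everything else is an immediate consequence of the already-stated calculus rules for planar reflections and rotations.
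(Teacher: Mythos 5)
Your proof is correct and takes essentially the same route as the paper: induction on $m$ via $S_{m+1}=\Rf{\alpha_{m+1}}S_m$ and the identity $\beta_{m+1}=\alpha_{m+1}-\beta_m$, applying \cref{f:0420}\cref{f:0420iv} in the odd case and \cref{f:0420}\cref{f:0420ii} in the even case. Your explicit determinant argument for $\Fix\Rt{2\beta}$ is a small addition the paper leaves implicit, but otherwise the two arguments coincide.
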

\begin{proof}
We proceed by induction on $m$, discussing the odd and even cases separately.

\emph{Base case:} \emph{Case~1:} Assume that $m=1$. 
Then $\beta_1=\alpha_1$ and 
$S_1= \Rf{\alpha_1} = \Rf{\beta_1}$ so $\Fix S_1 = \Fix \Rf{\alpha_1} =
\RR(\cos(\alpha_1),\sin(\alpha_1)) = \RR(\cos(\beta_1),\sin(\beta_1))$ 
by \cref{e:0421a} as announced. 
\emph{Case~2:} Now assume that $m=2$.
Then $\beta_2= \alpha_2-\alpha_1$. 
Using \cref{f:0420}\ref{f:0420ii}, we obtain 
$S_2 = \Rf{\alpha_2}\Rf{\alpha_1} = \Rt{2(\alpha_2-\alpha_1)}
=\Rt{2\beta_2}$ and the claim follows. 

\emph{Inductive step:}
We assume that the result is true for some integer $m\geq 2$. 
Then 
\begin{equation}
S_{m+1} = \Rf{\alpha_{m+1}}\Rf{\alpha_m}\cdots \Rf{\alpha_2}\Rf{\alpha_1} = 
\Rf{\alpha_{m+1}}S_m
\end{equation}
and 
\begin{equation}
\label{e:0421b}
\beta_{m+1} = \alpha_{m+1}-\beta_{m}.
\end{equation}

\emph{Case~1:} $m+1$ is odd; equivalently, $m$ is even. 
Using the inductive hypothesis, 
\cref{f:0420}\cref{f:0420iv},
and \cref{e:0421b}, we obtain 
\begin{subequations}
\begin{align}
S_{m+1} &= \Rf{\alpha_{m+1}}S_m = \Rf{\alpha_{m+1}}\Rt{2\beta_m}\\
&=\Rf{\alpha_{m+1}-\tfrac{1}{2}(2\beta_m)}
=\Rf{\beta_{m+1}}
\end{align}
\end{subequations}
and the result follows.

\emph{Case~2:} $m+1$ is even; equivalently, $m$ is odd. 
Using the inductive hypothesis, \cref{f:0420}\cref{f:0420ii},
and \cref{e:0421b}, we obtain 
\begin{subequations}
\begin{align}
S_{m+1} &= \Rf{\alpha_{m+1}}S_m = \Rf{\alpha_{m+1}}\Rf{\beta_m}\\
&=\Rt{2(\alpha_{m+1}-\beta_m)}
=\Rt{2\beta_{m+1}}
\end{align}
\end{subequations}
and the result follows.
\end{proof}

The next example, which was used in an algorithmic context in 
\cite[Example~2.30]{1908.11576}, 
illustrates 
\cref{ex:0421a}, \cref{ex:0421b}, and 
\cref{t:finrefls}.

\begin{example}
\label{ex:hui}
(See also \cite[Example~2.30]{1908.11576})
Set $U:=\RR(1,0)$ so that $U^\perp=\RR(0,1)$,
and $V := \RR(1,1)$. 
Then
$\R{U}=\Rf{0}$,
$\R{U^\perp}=\Rf{\pi/2}$, 
and
$\R{V}=\Rf{\pi/4}$. 
Moreover, the following hold:
\begin{enumerate}
  \item 
  $\Fix(\R{U^\perp}\R{V}\R{U})
  = \Fix(\Rf{\pi/2}\Rf{\pi/4}\Rf{0})
  = \RR(\cos(\pi/4),\sin(\pi/4))
  = \RR(1,1) = V = \R{U}(V^\perp)
  $.
  \item 
  $\Fix(\R{U}\R{V}\R{U^\perp})
  = \Fix(\Rf{0}\Rf{\pi/4}\Rf{\pi/2})
  = \RR(\cos(\pi/4),\sin(\pi/4))
  = \RR(1,1) = V = \R{U}(V^\perp)
  $.
  \item 
  $\Fix(\R{V}\R{U^\perp}\R{U})
  = \Fix(\Rf{\pi/4}\Rf{\pi/2}\Rf{0})
  = \RR(\cos(-\pi/4),\sin(-\pi/4))
  = \RR(1,-1) = V^\perp$.
  \item 
  $\Fix(\R{V}\R{U}\R{U^\perp})
  = \Fix(\Rf{\pi/4}\Rf{0}\Rf{\pi/2})
  = \RR(\cos(3\pi/4),\sin(3\pi/4))
  = \RR(-1,1) = V^\perp$.
  \item 
  $\Fix(\R{U^\perp}\R{U}\R{V})
  = \Fix(\Rf{\pi/2}\Rf{0}\Rf{\pi/4})
  = \RR(\cos(3\pi/4),\sin(3\pi/4))
  = \RR(-1,1) = V^\perp$.
  \item 
  $\Fix(\R{U}\R{U^\perp}\R{V})
  = \Fix(\Rf{0}\Rf{\pi/2}\Rf{\pi/4})
  = \RR(\cos(-\pi/4),\sin(-\pi/4))
  = \RR(1,-1) = V^\perp$.
\end{enumerate}
\end{example}

\begin{figure}
  \centering
   \includegraphics[width=0.95\linewidth]{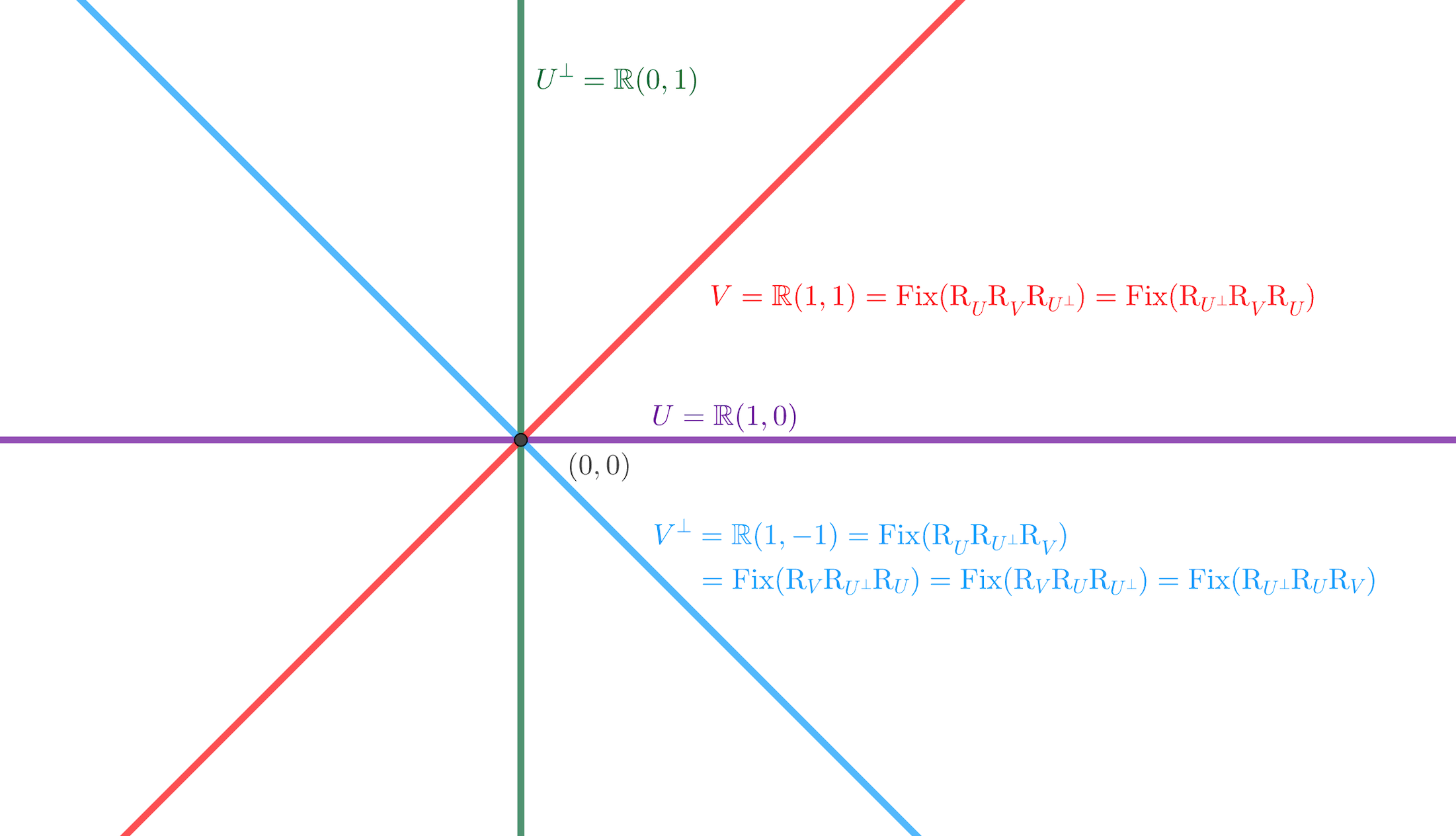}
   \caption{The fixed point sets for \cref{ex:hui}}
   \label{fig:hui}
\end{figure}

\begin{remark}
\cref{ex:hui} clearly shows that the order of the reflectors
influences the fixed point set. 
See also \cref{fig:hui} for a visualization. 
\end{remark}

\begin{example}
\label{ex:superfran}
Let $\gamma\in\RR$,  and 
let $\varepsilon_1,\varepsilon_2,\varepsilon_3$
all be small in absolute value.
Set
$\alpha_1 := \gamma + \pi/6+\varepsilon_1$,
$\alpha_2 := \gamma +\varepsilon_2$,
$\alpha_3 := \gamma -\pi/6+\varepsilon_3$,
and 
$\varepsilon := \varepsilon_1-\varepsilon_2+\varepsilon_3$, and
suppose that 
$U_i = \RR(\cos(\alpha_i),\sin(\alpha_i))$
for $i\in\{1,2,3\}$. 
Then it follows from \cref{t:finrefls} that 
\begin{subequations}
\begin{align}
\R{U_3}\R{U_2}\R{U_1}
&= \Rf{\alpha_3}\Rf{\alpha_2}\Rf{\alpha_1}
= \Rf{\alpha_3-\alpha_2+\alpha_1}\\
&= \Rf{\gamma+\varepsilon}
\end{align}
\end{subequations}
and 
\begin{equation}
\Fix(\R{U_3}\R{U_2}\R{U_1})
=\RR\big(\cos(\gamma+\varepsilon),\sin(\gamma+\varepsilon)\big).
\end{equation}
However, $U_1\cap U_2\cap U_3 = \{0\}$. 
\end{example}

\begin{remark}
Consider the setting of \cref{ex:superfran}.
\begin{enumerate}
  \item No matter which of the operators in 
  $\{\Pj{U_1},\Pj{U_2},\Pj{U_3},\Pj{U_1^\perp},\Pj{U_2^\perp},\Pj{U_3^\perp}\}$
  we apply to $\Fix(\R{U_3}\R{U_2}\R{U_1})$, we always obtain a line and 
  never the singleton 
  $U_1\cap U_2\cap U_3 = \{0\}$.
  \item If each $\varepsilon_i=0$, then 
  $\varepsilon = 0$ and $\Fix(\R{U_3}\R{U_2}\R{U_1})=U_2$.
  If additionally $\gamma=\pi/6$, then 
  $U_2 = \RR(\cos(\pi/6),\sin(\pi/6))
  =\RR(\sqrt{3}/2,1/2)$ 
  and we recover precisely \cref{ex:fran}.
\end{enumerate}
\end{remark}

We conclude with a comment on higher-dimensional Euclidean space.

\begin{remark}[$\RR^3$ and beyond]
Considering reflectors and rotations 
in $\RR^3$ (see, e.g., \cite{Wiki3Drot}) or even $\RR^n$ is more complicated because 
there is no ``easy''
counterpart of \cref{f:0420}. 
However, using the fact that eigenvalues of isometries are always drawn from 
$\pm 1$ or from nonreal complex conjugate pairs of magnitude $1$, 
one obtains at least the parity result that 
\begin{equation}
 m +  n  \equiv \dim(\Fix(R_mR_{m-1}\cdots R_1)) \mod 2
\end{equation}
for $m$ classical reflectors $R_1,\ldots,R_m$ on $\RR^n$. 
\end{remark}

\section*{Acknowledgements}
The research of HHB and XW was partially supported by Discovery Grants
of the Natural Sciences and Engineering Research Council of
Canada.



\begin{thebibliography}{999}
\seppthree

\bibitem{Fran2014}
F.J.\ Arag\'on Artacho, J.M.\ Borwein, 
and M.K.\ Tam:
Recent results on Douglas-Rachford methods
for combinatorial optimization problems,
\emph{Journal on Optimization Theory and Applications}~163 (2014),
1--30. 

\bibitem{postdocs}
H.H.\ Bauschke, J.Y.\ Bello Cruz, T.T.A.\ Nghia, H.M.\ Phan, 
and X.\ Wang:
The rate of linear convergence of the Douglas-Rachford algorithm for
subspaces is the cosine of the Friedrichs angle, 
\emph{Journal of Approximation Theory}~185 (2014),  63--79. 

\bibitem{BC2017}
H.H.\ Bauschke and P.L.\ Combettes:
\emph{Convex Analysis and Monotone Operator Theory in Hilbert Spaces},
second edition, Springer, 2017. 

\bibitem{1908.11576}
H.H.\ Bauschke, H.\ Ouyang, and X.\ Wang:
Circumcentered methods induced by isometries,
\emph{Vietnam Journal of Mathematics}, in press. 
\url{https://arxiv.org/abs/1908.11576}

\bibitem{BruckReich}
R.E.\ Bruck and S.\ Reich:
Nonexpansive projections and resolvents of accretive operators in Banach 
spaces,
\emph{Houston Journal of Mathematics}~3 (1977), 459--470.



\bibitem{Gallier}
J.\ Gallier:
Geometric Methods and Applications For Computer Science and Engineering
(Second Edition), Springer, 2011. 

\bibitem{RSNpaper}
F.\ Riesz and B.\ Sz\"okefalvi-Nagy:
\"Uber Kontraktionen des Hilbertschen Raumes, 
\emph{Acta Scientiarum Mathematicarum}~10 (1943), 202--205. 

\bibitem{RSNbook}
F.\ Riesz and B.\ Sz\"okefalvi-Nagy:
\emph{Functional Analysis},
Ungar Publishing New York, 1955.

\bibitem{Rockppa}
R.T.\ Rockafellar:
Monotone operators and the proximal point algorithm,
\emph{SIAM Journal on Control and Optimization}~14 (1976), 877--898. 

\bibitem{Stillwell}
J.\ Stillwell:
\emph{Naive Lie Theory}, 
Springer, 2008. 


\bibitem{WikiRotRef}
Wikipedia contributors:
Rotations and reflections in two dimensions,
\emph{Wikipedia}, 
\url{https://en.wikipedia.org/w/index.php?title=Rotations_and_reflections_in_two_dimensions&oldid=950574565}

\bibitem{Wiki3Drot}
Wikipedia contributors:
3D rotation group, 
\emph{Wikipedia},
\url{https://en.wikipedia.org/wiki/3D_rotation_group}




\end{thebibliography}
\end{document}